\newtheorem{theorem}{Theorem}[section]
\newtheorem{lemma}{Lemma}[section]
\newtheorem{proposition}{Proposition}[section]
\theoremstyle{definition}
\newtheorem{definition}{Definition}[section]
\newtheorem{corollary}{Corollary}[section]
\theoremstyle{definition}
\newtheorem{example}{Example}[section]
\numberwithin{equation}{section}
\patchcmd{\section}{\scshape}{\bfseries}{}{}
\renewcommand{\@secnumfont}{\bfseries}
\title{Quadratization of ODE\lowercase{s}: Monomial vs. Non-Monomial}
\author[Alauddin]{Foyez Alauddin}
\address{$^1$Trinity School NYC, 101 West 91st St, New York, NY, 10024, USA} 
\email{foyez.alauddin21@trinityschoolnyc.org}
\author[Pogudin]{Advisor: Gleb Pogudin}
\address{$^2$LIX, CNRS, \'Ecole  Polytechnique,  Institute  Polytechnique  de  Paris,  Palaiseau,  91120, France} 
\email{gleb.pogudin@polytechnique.edu}
\begin{document}
\maketitle
\begin{center}
Foyez Alauddin$^1$ \\
Advisor: Gleb Pogudin$^2$ 
\end{center}

\begin{abstract}
    Quadratization is a transform of a system of ODEs with polynomial right-hand side into a system of ODEs with at most quadratic right-hand side via the introduction of new variables. It has been recently used as a pre-processing step for new model order reduction methods, so it is important to keep the number of new variables small. Several algorithms have been designed to search for a quadratization with the new variables being monomials in the original variables. To understand the limitations and potential ways of improving such algorithms, we study the following question: can quadratizations with not necessarily monomial new variables produce a model of substantially smaller dimension than quadratization with only monomial new variables?
    
    To do this, we restrict our attention to scalar polynomial ODEs. Our first result is that a scalar polynomial ODE $\dot{x}=p(x)=a_nx^n+a_{n-1}x^{n-1}+\ldots + a_0$ with $n\geqslant 5$ and $a_n\neq0$ can be quadratized using exactly one new variable if and only if $p(x-\frac{a_{n-1}}{n\cdot a_n})=a_nx^n+ax^2+bx$ for some $a,b \in \mathbb{C}$. In fact, the new variable can be taken $z:=(x-\frac{a_{n-1}}{n\cdot a_n})^{n-1}$. Our second result is that two non-monomial new variables are enough to quadratize all degree $6$ scalar polynomial ODEs. Based on these results, we observe that a quadratization with not necessarily monomial new variables can be much smaller than a monomial quadratization even for scalar ODEs.
    
    The main results of the paper have been discovered using computational methods of applied nonlinear algebra (Gr\"obner bases), and we describe these computations.
\end{abstract}

\noindent \textbf{Keywords:} quadratization, nonlinear ODEs, model order reduction, Gr\"obner bases, symbolic computation



\section{Introduction}
    Model order reduction of non-linear dynamical systems is an important tool in applied mathematics. 
    The goal of reducing dynamical systems is to make them easier to analyze. 
    In this paper, we investigate quadratization, a technique used as a preprocessing step to some recent model order reduction methods. 

    Quadratization is a transformation of a system of ordinary differential equations with polynomial right-hand side into a system with at most quadratic right-hand side via the introduction of new  variables. 
    While quadratization does indeed lift up the dimension of a system, there are more powerful model order reduction methods for ODEs with at most quadratic right-hand side that produce better reductions than general methods~\cite{QLMOR, POD}. 
    In essence, quadratization lifts up the dimension of a system for it be pulled down by dedicated model order reduction methods. 
    
    We illustrate quadratization using the following simple scalar polynomial ODE: 
    \begin{equation}\label{initial}
        \dot{x}=x^{10}
    \end{equation}
    The right-hand side has degree greater than two, but by introducing $z:=x^9$, we can write:
    \begin{equation}\label{system}
    \begin{cases}
    \dot{x}=zx\\
    \dot{z}=9x^8\dot{x}=9x^{18}=9z^2
    \end{cases}
    \end{equation}
    In \eqref{system}, the right-hand side of $\dot{x}$ and $\dot{z}$ are at most quadratic.
    Furthermore, every solution of~\eqref{initial} yields a solution of~\eqref{system}. 
    We say that the order of a quadratization is the number of new variables introduced. 
    Thus, this quadratization has order $1$.
    
    In~\eqref{system}, our new variable $z$ is a monomial in $x$.
    Thus, we call this a monomial quadratization. 
    On the other hand, if one of our new variables has non-monomial right-hand side, we will refer to this as non-monomial quadratization. 
    Several algorithms have been developed to find monomial quadratizations of ODEs and ODE systems~\cite{Pogudin, Carothers, CRN}. 
    An additional motivation to study monomial quadratization comes from the desire to create realistic chemical reaction networks (CRNs), which can be interpreted as polynomial ODEs~\cite{CRN}.
    
    However, we are not aware of any algorithms for finding an optimal quadratization with not necessarily monomial new variables.
    In order to understand the potential benefits of such algorithms, we ask the following question: can non-monomial quadratizations produce a system of substantially smaller dimension than monomial quadratizations? 
    
    In order to approach this question, we consider scalar polynomial ODEs as it is ``the simplest nontrivial case''. 
    While model order reduction techniques are not useful for scalar polynomial ODEs as their dimension is already minimized at $1$, they allow us a point of entry to understand the question posed above.
    
    In our research, we completely characterized the case when one new variable is enough to quadratize a scalar polynomial ODE. 
    We also found that any degree $6$ scalar polynomial ODE can be quadratized with two non-monomial new variables. 
    Our main results are formally stated as the following:
    \begin{itemize}
        \item Theorem \textcolor{red}{\ref{theorem}}: Suppose
        \begin{equation*}
            p(x)=a_nx^n+a_{n-1}x^{n-1}+a_{n-2}x^{n-2}+\dots a_2x^2+a_1x+a_0
        \end{equation*}
        where $n\geqslant 5$, $a_i\in \mathbb{C}$ for all $i\in\{0,1,2\dots, n-1, n\}$, and $a_n\neq 0$. 
        A scalar polynomial ODE $\dot{x} = p(x)$ can be quadratized using exactly one new variable if and only if $p(x-\frac{a_{n-1}}{n\cdot a_n}) = a_nx^n+ax^2+bx$ for some $a,b\in\mathbb{C}$. 
        Moreover, this new variable can be taken to be $z:=(x-\frac{a_{n-1}}{n\cdot a_n})^{n-1}$.
        \item Theorem \textcolor{red}{\ref{degree 6}}: Suppose 
        \begin{equation*}
            \dot{x}=p_6x^6+p_5x^5+p_4x^4+p_3x^3+p_2x^2+p_1x+p_0
        \end{equation*}
        for $p_i\in\mathbb{C}$ for $i\in\{0,1,2,3,4, 5, 6\}$ and $p_6\neq 0$.  
        This scalar polynomial ODE can always be quadratized with two new variables.
        Moreover, they can be taken to be:
        \begin{equation*}
         z_1:=(\tfrac{x}{\sqrt[6]{p_6}}-\tfrac{p_5}{6\cdot p_6})^5+(\tfrac{25p_5^3}{216\sqrt{p_6^5}}-\tfrac{5p_5p_4}{12\sqrt{p_6^3}}+\tfrac{5p_3}{8\sqrt{p_6}})(\tfrac{x}{\sqrt[6]{p_6}}-\tfrac{p_5}{6\cdot p_6})^3,\qquad z_2:=(\tfrac{x}{\sqrt[6]{p_6}}-\tfrac{p_5}{6\cdot p_6})^2.
        \end{equation*}
    \end{itemize}
    We also show that all degree $3$ and $4$ scalar polynomial ODEs can be quadratized with one monomial new variable and all degree $5$ scalar polynomial ODEs can be quadratized with two monomial new variables (see Proposition \ref{3,4,5}). Each of the quadratizations presented in our main results are optimal. 
    
    In order to acheive these results, we employed computational techniques that made use of Gröbner basis (see section \ref{section: Computational Techniques}). 
    In Section \ref{section: Proofs and Other Results}, we provide mathematical proofs of Theorem~\ref{theorem} and Theorem \ref{degree 6}.



\section{Preliminaries}
    \begin{definition}
    Consider the following scalar polynomial ODE:
    \begin{equation}
        \dot{x}=p(x)
    \end{equation} 
    where $p(x) \in \mathbb{C}[x]$. Then, a list of $m$ new variables:
    \begin{equation}
        z_1:=z_1(x), z_2:=z_2(x),\dots, z_m:=z_m(x)
    \end{equation}
    is said to \emph{quadratize} $\dot{x}$ if there exists polynomials $h_1, h_2, \dots, h_{m+1} \in \mathbb{C}[x, z_1, z_2, \dots, z_m]$ of degree at most two such that:
    \begin{itemize}
        \item $\dot{x}=h_1(x, z_1, z_2, \dots, z_m)$;
        \item $\dot{z_i}=z_i'(x)\dot{x}=h_{i+1}(x, z_1, z_2, \dots, z_m)$ for $1\leqslant i \leqslant m$
    \end{itemize}
    
    The number $m$ is said to be the \emph{order} of the quadratization. A quadratization of the smallest possible order is called \emph{optimal}. We refer to new variables whose right-hand sides are monomials as \emph{monomial new variables} and those whose right-hand sides are not monomials as \emph{non-monomial new variables}.
    \end{definition}

    \begin{example}
        Consider the scalar polynomial ODE $\dot{x}=x^n$ with $n > 2$. 
        Let $z:= x^{n-1}$. Note that $z$ is a monomial new variable. We will use $z$ to quadratize $\dot{x}$. We can write:
        \begin{equation}
            \begin{cases}
            \dot{x}=zx\\
            \dot{z}=z'(x)\dot{x} =  (n-1)x^{n-2} \cdot x^n=(n-1)x^{2n-2}=(n-1)z^2
            \end{cases}
        \end{equation}
        Thus, we have quadratized $\dot{x}$ with $z:=x^{n-1}$ as both $\dot{x}$ and $\dot{z}$ are can be written as quadratic polynomials in $z$ and $x$. In particular, this quadratization has order $1$ and is optimal. 
    \end{example}
    
    \begin{example}
        Consider the scalar polynomial ODE $\dot{x}=x^5+x^4+x^3+x^2+x+1$. We let $z_1(x):=x^4$ and $z_2(x):=x^3$. Note that $z_1$ and $z_2$ are monomial new variables. It follows that:
        \begin{equation}
        \begin{cases}
            \dot{x}=z_1x+z_1+z_2+x^2+x+1\\
            \dot{z}_1=z_1'(x)\dot{x}=4x^3\dot{x}=4(z_1^2+z_1z_2+z_2^2+z_1x+z_1+z_2)\\
            \dot{z}_2=z_2'(x)\dot{x}=3x^2\dot{x}=3(z_1z_2+z_2^2+z_1x+z_1+z_2+x^2)
        \end{cases}
        \end{equation}
        Thus, we have quadratized $\dot{x}$ with $z_1$ and $z_2$ as $\dot{x}$, $\dot{z}_1$, and $\dot{z}_2$ are written as quadratic polynomials in $z_1, z_2$, and $x$. In particular, this quadratization has order $2$. It can be shown using Theorem \ref{theorem} and Proposition \ref{3,4,5} that this quadratization is optimal. 
    \end{example}
    


\section{Main Results}

    Our main results are Theorem \textcolor{red}{\ref{theorem}} and Theorem \textcolor{red}{\ref{degree 6}}. 
    \begin{theorem}\label{theorem}
        Suppose
        \begin{equation*}
            p(x)=a_nx^n+a_{n-1}x^{n-1}+a_{n-2}x^{n-2}+\dots a_2x^2+a_1x+a_0
        \end{equation*}
        where $n\geqslant 5$, $a_i\in \mathbb{C}$ for all $i\in\{0,1,2\dots, n-1, n\}$, and $a_n\neq 0$. 
        A scalar polynomial ODE $\dot{x} = p(x)$ can be quadratized using exactly one new variable if and only if $p(x-\frac{a_{n-1}}{n \cdot a_n}) = a_nx^n+ax^2+bx$ for some $a,b\in\mathbb{C}$. 
        Moreover, this new variable can be taken to be $z:=(x-\frac{a_{n-1}}{n \cdot a_n})^{n-1}$.
    \end{theorem}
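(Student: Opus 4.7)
My plan is to prove the two directions separately; the \emph{if} direction is a direct verification after a change of coordinates to the depressed form, while the \emph{only if} direction is where the real work lies. For the \emph{if} direction, in the translated coordinate $y$ in which $\dot{x} = p(x)$ takes the depressed form $\dot{y} = a_n y^n + a y^2 + b y$ (guaranteed by the hypothesis on the shape of $p(x - \tfrac{a_{n-1}}{n\cdot a_n})$), I set $z := y^{n-1}$. Then $\dot{z} = (n-1)(a_n z^2 + a z y + b z)$ and $\dot{y} = a_n z y + a y^2 + b y$, both of degree at most two in $y$ and $z$; translating back to $x$ yields the claimed one-variable quadratizer.

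For the \emph{only if} direction, suppose $z(x)$ is any order-one quadratizer, so that $p(x) = h_0(x, z(x))$ and $z'(x)p(x) = h_1(x, z(x))$ for some polynomials $h_0, h_1 \in \mathbb{C}[x, z]$ of total degree at most two. Writing $h_0 = \alpha z^2 + \beta xz + \gamma z + \delta x^2 + \epsilon x + \zeta$ and using $n \geqslant 5$, a degree count on both equations rules out $\deg z(x) = n/2$ (there $z'(x)p(x)$ would have degree $\tfrac{3n}{2}-1$, exceeding the degree of any quadratic polynomial in $x,z$) and forces $\deg z(x) = n-1$, $\alpha = 0$, and $\beta \neq 0$. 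After normalizing $z$ to be monic (so $\beta = a_n$) and performing a suitable translation of $x$ to eliminate $\gamma$, the first relation becomes
\begin{equation*}
\tilde p(y) = a_n\, y\, \tilde z(y) + \tilde r(y),\qquad \tilde r(y) = \delta y^2 + \epsilon y + \zeta,
\end{equation*}
with $\tilde z$ monic of degree $n-1$.

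The crux is now to exploit the second relation $\tilde z'(y) \tilde p(y) = h_1(y, \tilde z(y))$. Using $\tilde z' \tilde p = \tfrac{a_n}{2}\, y\, (\tilde z^2)'(y) + \tilde z'(y) \tilde r(y)$ and subtracting the forced $(n-1) a_n\, \tilde z(y)^2$ term, the left-hand side becomes $\sum_k \tfrac{a_n}{2}(k-2n+2)(\tilde z^2)_k\, y^k + \tilde z'(y) \tilde r(y)$, while the right-hand side lies in the span of $\{1, y, y^2, \tilde z, y\tilde z\}$ and has degree at most $n$ in $y$. Since $\tilde z'(y) \tilde r(y)$ has degree at most $n$ as well, the coefficients of $y^k$ for $n+1 \leqslant k \leqslant 2n-3$ must vanish purely through the $(\tilde z^2)_k$ contributions. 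Expanding $(\tilde z^2)_k = \sum_{i+j=k} \tilde z_i \tilde z_j$ with $\tilde z_{n-1}=1$, a descending induction on $k$ gives $(\tilde z^2)_k = 2\, \tilde z_{k-n+1}$ modulo coefficients already killed at earlier stages, successively forcing $\tilde z_{n-2} = \tilde z_{n-3} = \dots = \tilde z_2 = 0$. Thus $\tilde z(y) = y^{n-1} + c_1 y + c_0$. Matching the coefficient at degree $n-2$ then forces $\zeta = 0$, so $\tilde p(y) = a_n y^n + a y^2 + b y$; the translation amount is finally identified with $\tfrac{a_{n-1}}{n\cdot a_n}$ by comparing the $y^{n-1}$ coefficient of $\tilde p$ (now zero) to that of the translated $p$.

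I expect the main obstacle to be the triangular-induction step: one must verify that the contributions of $\tilde z'(y) \tilde r(y)$ and the off-diagonal cross-terms in $(\tilde z^2)_k$ are cleanly absorbed by the inductive hypothesis, and that the degree window $[n+1, 2n-3]$ (nonempty and of size $n-3$ precisely because $n \geqslant 5$) provides enough constraints to pin down the $n-3$ intermediate coefficients of $\tilde z$.
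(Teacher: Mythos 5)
Your proposal is correct and follows essentially the same route as the paper: the backward direction is the identical direct verification with $z:=y^{n-1}$ in depressed coordinates, and the forward direction forces $\deg z = n-1$ by a degree count, kills the intermediate coefficients of $z$ by comparing the two expressions for $\dot{z}$, and kills the constant term of $p$ via the unmatchable $y^{n-2}$ coefficient. The only differences are organizational: the paper strips the linear and constant terms of $z$ up front (its Lemma on omitting such terms) and derives a single contradiction at the top coefficient of the remainder $r(x)$, whereas you retain those terms and run a descending induction over the window $n+1\leqslant k\leqslant 2n-3$ — both amount to the same coefficient comparison.
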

    
    In Theorem \textcolor{red}{\ref{theorem}}, the quadratization is optimal as it has order one, and the original ODE has not been quadratic already.
    
    \begin{theorem}\label{degree 6}
        Suppose 
        \begin{equation*}
            \dot{x}=p_6x^6+p_5x^5+p_4x^4+p_3x^3+p_2x^2+p_1x+p_0
        \end{equation*}
        for $p_i\in\mathbb{C}$ for $i\in\{0,1,2,3,4, 5, 6\}$ and $p_6\neq 0$.  Then, $\dot{x}$ can be quadratized with two new variables of the form:
        \begin{equation*}
         z_1:=(\tfrac{x}{\sqrt[6]{p_6}}-\tfrac{p_5}{6\cdot p_6})^5+(\tfrac{25p_5^3}{216\sqrt{p_6^5}}-\tfrac{5p_5p_4}{12\sqrt{p_6^3}}+\tfrac{5p_3}{8\sqrt{p_6}})(\tfrac{x}{\sqrt[6]{p_6}}-\tfrac{p_5}{6\cdot p_6})^2, \qquad z_2:= (\tfrac{x}{\sqrt[6]{p_6}}-\tfrac{p_5}{6\cdot p_6})^3.
        \end{equation*}
    \end{theorem}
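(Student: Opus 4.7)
The plan is to reduce the ODE to a normal form via an affine change of the state variable, analyze the linear span of quadratic monomials in the two proposed new variables as a subspace of $\mathbb{C}[y]$, and pin down the free constant by a single compatibility condition.

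First I would introduce a scaling and translation $y = \lambda(x - \mu)$ so that the ODE takes the normal form $\dot{y} = y^6 + q_4 y^4 + q_3 y^3 + q_2 y^2 + q_1 y + q_0$ with monic leading coefficient and the $y^5$ term killed by the translation. Since quadratization is preserved under affine changes of the state variable, it suffices to produce a quadratization in this normal form, after which the explicit formula in the statement is recovered by back-substituting. Writing $z_2 = y^3$ and $z_1 = y^5 + C y^2$ for an undetermined constant $C$, the theorem reduces to showing that $\dot{x}, \dot{z_1}, \dot{z_2}$ all lie in the linear subspace $V \subseteq \mathbb{C}[y]$ spanned by the ten quadratic monomials $1, x, z_1, z_2, x^2, xz_1, xz_2, z_1^2, z_1 z_2, z_2^2$.

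The heart of the argument is the structure of $V$. Expanding each quadratic monomial in $y$ shows that $V$ contains $y^k$ individually for every $k \in \{0,1,2,3,4,5,6,8\}$ (using in particular $y^5 = z_1 - C y^2$ and $y^8 = z_1 z_2 - C y^5$), while $y^{10}$ and $y^7$ enter $V$ only through the single product $z_1^2 = y^{10} + 2 C y^7 + C^2 y^4$, and $y^9$ is absent from $V$ entirely. Because $q_5 = 0$ in the normal form, neither $\dot{z_1} = (5 y^4 + 2 C y) \dot{y}$ nor $\dot{z_2} = 3 y^2 \dot{y}$ produces any $y^9$. The $y^{10}$ and $y^7$ coefficients of $\dot{z_1}$ work out to $5$ and $5 q_3 + 2 C$ respectively, and for this pair to match the ratio $(1 : 2 C)$ forced by $z_1^2$ one needs $10 C = 5 q_3 + 2 C$, which gives $C = \tfrac{5 q_3}{8}$.

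It then remains to verify the three derivatives lie in $V$, which is essentially mechanical. The polynomial $\dot{x}$ has degree at most $6$, and every $y^k$ with $k \le 6$ belongs to $V$. The polynomial $\dot{z_2}$ has maximal degree $8$ with vanishing $y^7$ coefficient (because $q_5 = 0$), so each monomial it contains is in $V$. And with $C = \tfrac{5 q_3}{8}$ the difference $\dot{z_1} - 5 z_1^2$ has degree at most $8$ with both the $y^{10}$ and $y^7$ contributions cancelled by construction, so it too lies in $V$. The final step is to translate $C = \tfrac{5 q_3}{8}$ back through the change of variables and express $q_3$ in terms of $p_3, p_4, p_5, p_6$; this should reproduce the explicit coefficient $\tfrac{25 p_5^3}{216 \sqrt{p_6^5}} - \tfrac{5 p_5 p_4}{12 \sqrt{p_6^3}} + \tfrac{5 p_3}{8 \sqrt{p_6}}$ stated in the theorem. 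The main obstacle is conceptual rather than computational, namely recognizing that $z_1^2$ is the unique ``channel'' through which the $y^{10}$ term of $\dot{z_1}$ can be absorbed, which forces both the precise shape $y^5 + C y^2$ of $z_1$ (rather than, say, $y^5 + C y^3$) and the specific value of $C$. As the paper remarks, this identification was made via Gr\"obner basis computations.
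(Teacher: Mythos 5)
Your proposal is correct and follows essentially the same route as the paper: the same affine normalization killing the degree-$5$ term, and the same pair of new variables $z_1=y^5+\tfrac{5q_3}{8}y^2$, $z_2=y^3$ (the paper then simply exhibits the explicit quadratic right-hand sides, with free parameters $c_1,c_2,c_3$, and verifies them). Your linear-span analysis of $V$ — in particular the observation that $y^7$ and $y^{10}$ enter only through $z_1^2$, which forces $[y^7]\dot z_1 = 2C\,[y^{10}]\dot z_1$ and hence $C=\tfrac{5q_3}{8}$ — is a cleaner conceptual justification of the constant that the paper only recovers from its Gr\"obner basis computation.
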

    
    Notice that $\dot{x}$ represents any degree $6$ scalar polynomial ODE. 
    For equations not satifying the requirements of Theorem \ref{theorem}, this quadratization is optimal. 
    
    Additionally, we've shown that the general form degree $6$ scalar polynomial ODE cannot be quadratized with two monomial new variables, but can be quadratized with three monomial new variables (see Lemma \ref{cantquad} and Lemma \ref{canquad}).
    
    \begin{proposition}\label{3,4,5}
    \hfill
        \begin{enumerate}[(i)]
            \item All degree $3$ scalar polynomial ODEs can be quadratized by exactly one new variable, $z:=x^2$.
            \item All degree $4$ scalar polynomial ODEs can be quadratized by exactly one new variable, $z:=x^3$. 
            \item All degree $5$ scalar polynomial ODEs can be quadratized by exactly two new variables, $z_1:=x^4$ and $z_2:=x^3$.
        \end{enumerate}
    \end{proposition}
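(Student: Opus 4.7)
The plan is to prove each of the three claims by direct substitution: given the proposed new variable(s) $z_i$, compute $\dot{z}_i = z_i'(x)\dot{x}$ and verify that both $\dot{x}$ and each $\dot{z}_i$ can be rewritten as polynomials of total degree at most two in $\{x, z_1, \ldots, z_m\}$. In each case, the crux of the verification is a short table expressing the powers of $x$ arising in these products as quadratic monomials in the new variables.

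For part (i), with $z := x^2$, I would simply write $\dot{x} = a_3(xz) + a_2 z + a_1 x + a_0$ and $\dot{z} = 2x\dot{x} = 2a_3 z^2 + 2a_2 xz + 2a_1 z + 2a_0 x$, both of which are manifestly quadratic in $\{x, z\}$. For part (iii), with $z_1 := x^4$ and $z_2 := x^3$, the powers appearing in $\dot{x}$, $\dot{z}_1 = 4x^3\dot{x}$, and $\dot{z}_2 = 3x^2\dot{x}$ go up to $x^8$; I would use the identifications $x^8 = z_1^2$, $x^7 = z_1 z_2$, $x^6 = z_2^2$, $x^5 = xz_1$, $x^4 = z_1$, $x^3 = z_2$ to rewrite each right-hand side as a quadratic polynomial in $\{x, z_1, z_2\}$.

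The main obstacle is part (ii): taking $z := x^3$ at face value fails when $a_3 \neq 0$, since the term $3a_3 x^5$ appearing in $\dot{z} = 3x^2\dot{x}$ rewrites as $3a_3 x^2 z$, which has total degree three in $\{x, z\}$. To treat the general case I would use the same shift trick as in Theorem \ref{theorem}: introduce $y := x - d$ with $d$ chosen so that the cubic coefficient of $\dot{y}$ vanishes, giving $\dot{y} = a_4 y^4 + b_2 y^2 + b_1 y + b_0$ in the new coordinate. Taking $z := y^3$, the absence of the $y^3$ term in $\dot{y}$ guarantees that $\dot{z} = 3y^2\dot{y}$ contains only the powers $y^6 = z^2$, $y^4 = yz$, $y^3 = z$, and $y^2$, each quadratic in $\{y, z\}$. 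Since $y$ is affine in $x$, the resulting expressions remain quadratic in $\{x, z\}$ as well; this is why part (ii), like Theorem \ref{theorem}, really requires the new variable to be the cube of a shifted linear form in $x$ rather than $x^3$ itself.
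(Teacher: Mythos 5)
Your proposal is correct and follows essentially the same route as the paper: direct substitution for parts (i) and (iii) with exactly the monomial identifications you list, and for part (ii) the paper's own proof likewise first applies the shift of Lemma \ref{lemma:shift} and takes $z:=y^3$ in the shifted coordinate (despite the proposition's statement reading $z:=x^3$), precisely because the $x^5$ term in $3x^2\dot{x}$ obstructs the unshifted choice. Your observation that quadraticity in $\{y,z\}$ transfers to $\{x,z\}$ since $y$ is affine in $x$ is a small point the paper leaves implicit.
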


    Note that in parts $(i)$ and $(ii)$ of Proposition \ref{3,4,5}, the quadratizations are optimal for precisely the same reason the quadratization in Theorem \ref{theorem} is optimal. 



\section{Discussion}
    In our main results, we have given conditions for when one new variable is enough to quadratize a scalar polynomial ODE. In fact, this new variable has non-monomial right-hand side. We have also shown that two non-monomial new variables is enough to quadratize any degree $6$ scalar polynomial ODE. 
    
    Theorem \textcolor{red}{\ref{theorem}} is interesting because it demonstrates that even if we deal with high degree scalar polynomial ODEs, there is a certain form of these ODEs that can be quadratized with only one non-monomial new variable. In particular, the most interesting part of Theorem \textcolor{red}{\ref{theorem}} is its use of linear shift. We consider the following scalar polynomial ODE to illustrate this important feature of Theorem \textcolor{red}{\ref{theorem}}:
    \begin{equation}\label{example}
        \dot{x}=(x+1)^n=x^n+{n\choose 1}x^{n-1}+{n\choose 2}x^{n-2}+\dots+{n\choose{n-2}}x^2+{n\choose{n-1}}x+1
    \end{equation}
    In \eqref{example}, every coefficient behind $x^k$ for $k\in\{0, 1, 2, \dots, n\}$ is non-zero. While it doesn't appear to be quadratizable by just one new variable, Theorem \textcolor{red}{\ref{theorem}} tells us it is. We can say that $p(x)=(x+1)^n$ and $a_{n-1}=n$. Thus, it follows that: 
    \begin{equation}
        p(x-\frac{a_{n-1}}{n})=p(x-1)=(x-1+1)^n=x^n
    \end{equation}
    By Theorem \ref{theorem}, since $x^n$ is of the form $x^n+ax^2+bx$, it can be quadratized by one new variable. 
    
    Most algorithms do not consider this shift in determining the order of quadratization and thus, for arbitrarily large $n$, these algorithms would introduce more variables than necessary. 
    
    More precisely, we can provide a lower bound on the number of monomial new variables necessary to quadratize the scalar ODE presented in \eqref{example}. 
    Suppose set $S=\{z_i\ |\ 1\leqslant i \leqslant k\}$ where $2\leqslant \deg z_1 < \deg z_2 < \dots < \deg z_k$ denotes the set of monomial new variables used to quadratize the ODE in \eqref{example} (we do not consider degree less than two due to Lemma~\ref{claim}).
    Let us also append $x$ and $1$ to set $S$. 
    Any quadratic term in our quadratization can be formed by choosing any two, not necessarily distinct, elements of set $S$ and multiplying them together. 
    It follows that we can form at most ${{k+3}\choose{2}}$ quadratic monomial terms. 
    Since the right-hand side of~\eqref{example} must be quadratized and contains $n + 1$ monomials, we have:
    \begin{equation}
            {{k+3}\choose{2}}\geqslant n+1  \quad\implies\quad k\geqslant \frac{-5 + \sqrt{8n+9}}{2}
    \end{equation}
    Here, we have provided a lower bound for $k$ or the number of new variables introduced. For larger $n$, we get larger $k$. However, we show that simply one new variable is enough for any value of $n$ if we consider non-monomial new variables, which is a significant improvement on monomial quadratization. 
    
    Similarly, Theorem \textcolor{red}{\ref{degree 6}} shows that degree $6$ scalar polynomial ODE can be quadratized by two non-monomial new variables. 
    On the other hand, we also showed in Lemma~\ref{cantquad} and Lemma~\ref{canquad} that a general degree $6$ scalar polynomial ODE cannot be quadratized using two monomial new variables, but all degree $6$ scalar polynomial ODEs can be quadratized using three monomial new variables. By allowing our new variables to have non-monomial right-hand side, we improve the order of the quadratization by $~30\%$. 
    
    Altogether, our results suggest that considering quadratizations with non-monomial new variables can give us more optimal quadratizations than monomial quadratizations as we increase the degree  and dimension of our ODEs. 
    Therefore, considering non-monomial quadratization of multivariable ODEs and ODE systems may be a worthwhile pursuit. 
    Moreover, since our results provide explicit formulas for new variables, they can be used to improve current algorithms for monomial quadratization, for example, by applying some variable shifts to the input system as suggested by Theorem~\ref{theorem}.



\section{Proofs and Other Results}\label{section: Proofs and Other Results}
    The following three lemmas are used to prove Theorem \textcolor{red}{\ref{theorem}}. 
    
    \begin{lemma}\label{lemma:shift}
        For every scalar polynomial ODE's $\dot{x}=a_nx^n+a_{n-1}x^{n-1}+...+a_2x^2+a_1x+a_0$, there exists a unique change of variables $x \to x + \lambda$ such that $a_{n-1}$ becomes zero.
    \end{lemma}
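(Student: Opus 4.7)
The plan is to make the substitution $x \mapsto x + \lambda$ explicit and read off the coefficient of $x^{n-1}$ as a linear function of $\lambda$, then solve for the unique $\lambda$ making it vanish. Since $a_n \neq 0$ (otherwise the leading term would not have degree $n$), this linear equation will have exactly one solution.

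More concretely, first I would expand each term $a_k(x+\lambda)^k$ via the binomial theorem and collect the coefficient of $x^{n-1}$. Only the terms with $k \in \{n-1, n\}$ contribute to $x^{n-1}$: the term $a_{n-1}(x+\lambda)^{n-1}$ contributes $a_{n-1}$ to the $x^{n-1}$-coefficient, and the term $a_n(x+\lambda)^n$ contributes $n a_n \lambda$. All terms $a_k(x+\lambda)^k$ with $k < n-1$ have degree strictly less than $n-1$ and contribute nothing. Hence the coefficient of $x^{n-1}$ in the shifted polynomial is
\begin{equation*}
    n a_n \lambda + a_{n-1}.
\end{equation*}

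Next, setting this expression equal to zero and solving for $\lambda$ yields
\begin{equation*}
    \lambda = -\frac{a_{n-1}}{n a_n},
\end{equation*}
which is well-defined and unique because $a_n \neq 0$. This establishes both existence and uniqueness of the desired shift.

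There is no real obstacle here; the statement is a direct consequence of the binomial expansion, and the proof is essentially a one-line calculation. The only small point to be careful about is ensuring that no lower-degree terms $a_k(x+\lambda)^k$ with $k \leqslant n-2$ can sneak a contribution into the coefficient of $x^{n-1}$, which is immediate from the degree bound.
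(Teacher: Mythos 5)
Your proof is correct and follows essentially the same approach as the paper: substitute $x \mapsto x + \lambda$, binomially expand, observe that the coefficient of $x^{n-1}$ becomes $n a_n \lambda + a_{n-1}$, and solve, with uniqueness following from $a_n \neq 0$. No substantive differences.
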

    
    \begin{proof}
        Let $x=y+\lambda$. Substituting for $x$ in $\dot{x}$, we get:
        \begin{equation*}
            \dot{y}=a_n(y+\lambda)^n+a_{n-1}(y+\lambda)^{n-1}+\dots +a_2(y+\lambda)^2+a_1(y+\lambda)+a_0
        \end{equation*}
        Binomially expanding, we have:
        \begin{equation*}
            \dot{y} = a_ny^n + (a_nn\lambda+a_{n-1}) y^{n-1} + o(y^{n-1})
        \end{equation*}
        Since the coefficient behind the $y^{n-1}$ is $a_nn\lambda+a_{n-1}$, it follows that the $y^{n-1}$ term vanishes if and only if  $\lambda=\frac{-a_{n-1}}{n\cdot a_n}$. 
        Since $n>0$ and $a_n\neq 0$, this shift exists and is unique.
    \end{proof}
    
    \begin{lemma}\label{variable}
        Assume that a scalar polynomial ODE $\dot{x}=a_nx^n + q(x)$ with $n\geqslant 5$ and $\deg q(x) \leqslant n - 1$ can be quadratized by a single new variable $z:=z(x)$.
        Then, $\deg z = n - 1$.
    \end{lemma}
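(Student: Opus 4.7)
The plan is to bound $d := \deg z(x)$ from both sides and conclude $d = n - 1$. Writing the quadratization as $\dot{x} = h_1(x, z)$ and $z'(x)\dot{x} = h_2(x, z)$ with $h_1, h_2 \in \mathbb{C}[x, z]$ of total degree at most $2$, and substituting $z = z(x)$, we obtain the polynomial identities
\begin{equation*}
    a_n x^n + q(x) = h_1(x, z(x)), \qquad z'(x)\bigl(a_n x^n + q(x)\bigr) = h_2(x, z(x))
\end{equation*}
in $\mathbb{C}[x]$. Since $a_n \neq 0$, the left-hand side of the first identity has degree $n$, and this already forces $d \geq 1$ (otherwise $h_1(x, z(x))$ would have degree at most $2 < n$); consequently the left-hand side of the second identity has degree $n + d - 1$.

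For the lower bound $d \geq n - 1$, I would expand $h_2 = \sum_{i+j \leq 2} c_{ij}\, x^i z^j$ and substitute $z = z(x)$. For $d \geq 1$ and $i + j \leq 2$, the term $x^i z(x)^j$ has degree $i + jd \leq 2d$, so $\deg h_2(x, z(x)) \leq 2d$. Comparing with the left-hand side gives $n + d - 1 \leq 2d$, i.e.\ $d \geq n - 1$.

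For the upper bound $d \leq n - 1$, I would rule out $d \geq n$ using the first identity. Write $h_1 = \alpha_0 + \alpha_1 x + \alpha_2 z + \alpha_3 x^2 + \alpha_4 xz + \alpha_5 z^2$; after substitution, the monomials $\alpha_5 z^2$, $\alpha_4 xz$, and $\alpha_2 z$ contribute pure-$x$ degrees $2d$, $d+1$, and $d$, respectively. If $d > n$, all three degrees strictly exceed $n$, forcing those three coefficients to vanish and leaving $\deg h_1(x, z(x)) \leq 2 < n$, a contradiction. In the borderline case $d = n$, the coefficients $\alpha_5, \alpha_4$ still vanish (their contributions have degrees $2n$ and $n+1$), while $\alpha_2 \neq 0$ is needed to produce the $x^n$ term. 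Turning to the second identity, the $z^2$-coefficient of $h_2$ must also vanish (otherwise it would contribute $x^{2n}$, exceeding the left-hand side degree $2n - 1$), so $\deg h_2(x, z(x)) \leq \max(d+1, d, 2) = n + 1$. This must equal $2n - 1$, which contradicts $n \geq 5$ since $2n - 1 > n + 1$. Hence $d \leq n - 1$, and combined with the lower bound, $d = n - 1$.

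The main subtlety lies in ruling out $d = n$: a single degree count on $h_1$ does not suffice, and one must combine information from both identities, with the hypothesis $n \geq 5$ entering precisely at the final inequality $2n - 1 > n + 1$.
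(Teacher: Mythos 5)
Your proof is correct. It relies on the same basic tool as the paper---degree comparison on the two polynomial identities $\dot{x}=h_1(x,z(x))$ and $z'(x)\dot{x}=h_2(x,z(x))$---but organizes the argument differently. The paper first reads off from the $\dot{x}$ identity that $x^n$ must be the leading monomial of one of $z$, $xz$, $z^2$, so $\deg z\in\{n,\,n-1,\,n/2\}$, and then eliminates $n$ and (for even $n$) $n/2$ by separate degree counts on $\dot{z}$, which forces a case split on the parity of $n$. You instead prove matching bounds: the single inequality $n+d-1=\deg\bigl(z'(x)\dot{x}\bigr)\leqslant \deg h_2(x,z(x))\leqslant 2d$ yields $d\geqslant n-1$ in one stroke and automatically subsumes the paper's elimination of $d=n/2$, while the upper bound comes from ruling out $d\geqslant n$; your handling of the borderline case $d=n$ (kill the $z^2$ term of $h_2$ because it would contribute degree $2n>2n-1$, then observe the remaining terms reach only degree $n+1<2n-1$) is essentially the paper's own argument for excluding $\deg z=n$. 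The net effect is a cleaner, parity-free proof of the same statement; what the paper's enumeration buys is that it makes explicit which quadratic monomial (namely $xz$) actually carries $x^n$ when $\deg z=n-1$, a normalization that reappears in the proof of Theorem~\ref{theorem}.
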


    \begin{proof}
        The first term of $\dot{x}$, $x^n$, must be quadratized by a quadratic term in $x$ and $z$.
        For $n \geqslant 5$, only terms $z$, $xz$, and $z^2$ may involve $x^n$, and this may happen only if $\deg z \geqslant 3$.
        Thus, $\deg z < \deg xz < \deg z^2$. 
        Hence $x^n$ must be the leading monomial of one of them.
        Thus, $\deg z \in \{n, n - 1, n / 2\}$.
          
        If $\deg z = n$, then $\deg \dot{z} = 2n - 1$.
        Since $\deg z^2 > 2n - 1$ and the degree of any other quadratic monomial in $x$ and $z$ is less than $2n - 1$, $\dot{z}$ cannot be quadratized. Thus, $\deg z \neq n$. 
          
        If $n$ is odd, the only remaining option is $\deg z = n - 1$, so we are done.
        Consider the case of even $n$ and $\deg z = \frac{n}{2}$.
        Let $z=z(x):= \alpha x^{\frac{n}{2}} + r(x)$ and $\dot{x} = x^n + q(x)$ where $\deg r(x) < n / 2$, $\deg q(x) <  n$, and $\alpha \neq 0$. 
        Since $\dot{z}$ must be quadratized, we have that:
        \begin{equation*}\label{eq: dotz 1}
            \dot{z} = z'(x) \dot{x} = \left(\alpha\frac{n}{2}x^{\frac{n-2}{2}} + \mathrm{o}(x^{\frac{n-2}{2}}) \right) (x^n + \mathrm{o}(x^n)) = \alpha\frac{n}{2}x^{\frac{3n - 2}{2}} + \mathrm{o}(x^{\frac{3n - 2}{2}}).
        \end{equation*}
        Since $\deg z \geqslant 3$, the degree of any quadratic polynomial in $x$ and $z$ is at most $2\deg z \leqslant n$, which is less than $\frac{3n - 2}{2}$ for $n \geqslant 6$. So, $\deg z \neq \frac{n}{2}$. Thus, $\deg z = n-1$. 
    \end{proof}
    
    \begin{lemma}\label{claim}
        Suppose $z_1, z_2, \dots, z_m$ quadratize some $\dot{x} = p(x)$. 
        Then, the same new variables with omitted constant and linear (w.r.t. $x$) terms also quadratize this ODE. 
    \end{lemma}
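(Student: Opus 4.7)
My plan is to argue by a direct substitution, showing that passing from $z_i$ to $\tilde{z}_i := z_i - c_i x - d_i$ (where $c_i x + d_i$ is the linear-plus-constant part of $z_i$) preserves the quadratization property, because (a) the change of variables is affine and (b) differentiation commutes with this affine shift up to a constant multiple of $\dot{x}$.

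More precisely, the first step is to write $z_i(x) = \tilde{z}_i(x) + c_i x + d_i$ for each $i$, with $\tilde{z}_i$ containing no constant or linear term. Then $\tilde{z}_i$ is a polynomial of degree $\deg z_i$ in $x$, and we have the algebraic identity $z_i = \tilde{z}_i + c_i x + d_i$, which is affine in $\tilde{z}_i$ with coefficients that are affine in $x$. Consequently, substituting this identity into any polynomial $h(x, z_1, \ldots, z_m)$ of total degree at most two in $(x, z_1, \ldots, z_m)$ produces a polynomial $\tilde{h}(x, \tilde{z}_1, \ldots, \tilde{z}_m)$ of total degree at most two in $(x, \tilde{z}_1, \ldots, \tilde{z}_m)$; this is a routine observation about how degree behaves under affine substitution.

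The second step is to apply this to the quadratization. By hypothesis, there exist quadratic polynomials $h_1, \ldots, h_{m+1}$ with $\dot{x} = h_1(x, z_1, \ldots, z_m)$ and $\dot{z}_i = h_{i+1}(x, z_1, \ldots, z_m)$. Substituting $z_i = \tilde{z}_i + c_i x + d_i$ into $h_1$ yields $\dot{x} = \tilde{h}_1(x, \tilde{z}_1, \ldots, \tilde{z}_m)$ with $\tilde{h}_1$ quadratic. For the derivatives, note that $\dot{\tilde{z}}_i = \dot{z}_i - c_i \dot{x} = h_{i+1}(x, z_1, \ldots, z_m) - c_i\, h_1(x, z_1, \ldots, z_m)$, which is again a polynomial of total degree at most two in $(x, z_1, \ldots, z_m)$; the same substitution then expresses it as a quadratic polynomial in $(x, \tilde{z}_1, \ldots, \tilde{z}_m)$.

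There is no real obstacle here: the statement is essentially a bookkeeping observation, and the only thing to verify carefully is that an affine substitution of the $z_i$ (with $x$-affine coefficients) into a polynomial quadratic in $(x, z_1, \ldots, z_m)$ yields a polynomial quadratic in $(x, \tilde{z}_1, \ldots, \tilde{z}_m)$. This is clear from inspecting each monomial of $h_j$ of total degree at most two and checking that its image after substitution has total degree at most two in the new variables, which completes the proof.
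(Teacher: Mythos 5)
Your proof is correct and follows essentially the same route as the paper: both rest on the observation that the affine substitution $z_i = \tilde{z}_i + c_i x + d_i$ sends polynomials of total degree at most two in $(x, z_1, \ldots, z_m)$ to polynomials of total degree at most two in $(x, \tilde{z}_1, \ldots, \tilde{z}_m)$. If anything, your write-up is slightly more complete, since you explicitly record that $\dot{\tilde{z}}_i = \dot{z}_i - c_i\dot{x} = h_{i+1} - c_i h_1$ remains quadratic, a step the paper's proof leaves implicit.
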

    \begin{proof}
        For each $i$ in $\{1,2,\dots, m\}$, let:
        \begin{equation*}
            z_i:=a_ix^{k_i} + \dots 
            + b_ix + c_i
        \end{equation*}
        Since $z_1, z_2, \dots, z_m$ quadratize $\dot{x}$, it follows that $\dot{x}, \dot{z}_1, \dot{z}_2, \dots, \dot{z}_m$ are written with at most quadratic right-hand side in $x, z_1, z_2, \dots, z_m$. For $i\in \{1, 2, \dots, m\}$, any quadratic terms in $z_i$'s can be written as quadratic in $z_i-b_ix-c_i$'s and $x$. 
    \end{proof}
    
    \begin{proof}[Proof of Theorem \ref{theorem}]\label{prooftheorem}
    We will first prove the backward direction: if $p(x-\frac{a_{n-1}}{n \cdot a_n}) = a_nx^n+ax^2+bx$ for some $a,b\in\mathbb{C}$, then $\dot{x}=p(x)$ can be quadratized using exactly one new variable. 
    
    Suppose $\dot{x}=p(x)=a_nx^n+a_{n-1}x^{n-1}+o(x^{n-1})$ and $p(x-\frac{a_{n-1}}{n\cdot a_n})=a_nx^n+ax^2+bx$. 
    So, we will shift $\dot{x}$ with the change of variables $x = y - \frac{a_{n-1}}{n\cdot a_n}$. Substituting for $x$ in $\dot{x}$, we have that $\dot{y} = a_ny^n+ay^2+by$ for some $a,b \in \mathbb{C}$.
    Let $z:=y^{n-1}$.
    It follows that:
    \begin{equation*} \label{eq: dotx 2}
        \begin{cases}
            \dot{y}=a_nzy+ay^2+by\\
            \dot{z}=(n-1)y^{n-2}(\dot{x})=(n-1)(a_ny^{2n-2}+ay^{n}+by^{n-1})=(n-1)(a_nz^2+azy+bz)
        \end{cases}
    \end{equation*}
    
    Now, we will prove the forward direction: if a scalar polynomial ODE $\dot{x} = p(x)$ can be quadratized using exactly one new variable, then $p(x-\frac{a_{n-1}}{n \cdot a_n}) = a_nx^n+ax^2+bx$ for some $a,b\in\mathbb{C}$.  
    
    Shifting $x$ as described in Lemma \ref{lemma:shift}, we will assume in what follows that our ODE is of the form
    \begin{equation*} \label{eq: dotx 3}
        \dot{x}=a_nx^n+q(x) 
    \end{equation*}
    where $\deg q(x)\leqslant n-2$ and $a_n \neq 0$. 
    By Lemma \ref{variable}, our new variable $z$ must be of degree $n-1$. Thus, let:
    \begin{equation*} \label{eq: dotz 3}
        z:=x^{n-1}+r(x)
    \end{equation*}
    By Lemma \ref{claim}, we can take $r(x)$ with no linear or constant term. Since $\dot{x}$ must be quadratized by $z$, we can write:
    \begin{equation} \label{eq: dotx 4}
        \dot{x}=a_nx^n+q(x)=a_nxz+ez+ax^2+bx+c=a_nx^n+(a_nx + e)r(x)+ax^2+bx+c
    \end{equation}
    Notice that $z^2$ is not involved in \eqref{eq: dotx 4} because for $n\geqslant 5, \deg z^2>\deg \dot{x}$. From \eqref{eq: dotx 4}, it follows that:
    \begin{equation} \label{eq: q(x) 2}
        q(x)=(a_nx + e)r(x)+ax^2+bx+c
    \end{equation}
    From \eqref{eq: q(x) 2}, since $\deg q(x)\leqslant n-2$, we observe that $2\leqslant d := \deg r(x) \leqslant n - 3$. This implies that $e=0$ because the $ez$ term in \eqref{eq: dotx 4} is the only term that involves $x^{n-1}$. However, we know that $\dot{x}$ has no $x^{n-1}$ term. 
    
    Since $\deg r(x) \leqslant n-3$, we can write:
    \begin{equation*}
        \begin{cases}
        r(x)=c_dx^d+c_{d-1}x^{d-1}+...+c_2x^2\\
        r'(x)=(d)c_dx^{d-1}+(d-1)c_{d-1}x^{d-2}+...+2c_2x
        \end{cases}
    \end{equation*}
    We assume that $r(x)$ is nonzero and in fact, $2\leqslant d \leqslant n-3$. Using the next two equations below, we will use proof by contradiction show that $r(x)=r'(x)=0$. 
    
    We look to write $\dot{z}$ in two different ways. The first way is by direct calculation:
    \begin{equation}\label{eq: dotz 4}
    \begin{aligned}
       \dot{z}=z'(x)\dot{x}&=((n-1)x^{n-2}+r'(x))(a_nx^n+a_nxr(x)+ax^2+bx+c)\\&=(n-1)x^{n-2}\cdot a_nx^n+(n-1)x^{n-2} \cdot a_nxr(x)+r'(x) \cdot a_nx^n+\mathrm{o}(x^{n-1+d})\\&=(n-1)a_nx^{2n-2}+(n-1)c_da_nx^{n-1+d}+(d)c_da_nx^{n-1+d} +\mathrm{o}(x^{n-1+d})
    \end{aligned}
    \end{equation}
    On the other hand, by definition of quadratization, the right-hand side of $\dot{z}$ must be written as at most quadratic in $x$ and $z$. Thus, $\dot{z}$ must have the form:
    \begin{equation} \label{eq: dotz 5}
        \begin{aligned}
        \dot{z}&=(n-1)a_nz^2+b_1zx+b_2z+b_3x^2+b_4x+b_5\\&=(n-1)a_nx^{2n-2}+2(n-1)a_nx^{n-1}r(x)+\mathrm{o}(x^{n-1+d})\\&=(n-1)a_nx^{2n-2}+2(n-1)c_da_nx^{n-1+d}+\mathrm{o}(x^{n-1+d})
        \end{aligned}
    \end{equation}
    for constants $b_1, b_2, b_3, b_4, b_5$. 
    
    Setting \eqref{eq: dotz 4} and \eqref{eq: dotz 5} equal to each other and simplifying, we obtain the following: 
    \begin{equation}\label{eq: r'(x) differential equation}
   (d)c_da_nx^{n-1+d} +\mathrm{o}(x^{n-1+d})=(n-1)c_da_nx^{n-1+d}+\mathrm{o}(x^{n-1+d})
    \end{equation}
    Analyzing the coefficients of the highest degree terms of each side of \eqref{eq: r'(x) differential equation}, we have: 
    \begin{equation}\label{eq: highest degree 2}
        (n-1)c_da_n=(d)c_da_n
    \end{equation}
    Since $c_d \neq 0$ and $a_n \neq 0$, we can divide both sides of \eqref{eq: highest degree 2} by them:
    \begin{equation}
        n-1=d
    \end{equation}
   However, since $d\leqslant n-3$,  we have reached a contradiction. So, $r(x)=r'(x)=0$.
    
    To complete the proof, we will show that the linear term $c$ in $\dot{x}$ must be zero. Using $r(x)=0$, we have:
    \[
     \dot{x} = a_nx^n + q(x), \quad q(x) = ax^2 + bx + c
    , \quad z:=x^{n-1}
    \]
    \begin{equation*} \label{dotx final}
    \dot{x} = a_nzx + ax^2 + bx + c
    \end{equation*}
    \begin{equation} \label{eq: dotz quadratization 1}
    \dot{z} = z'(x)\dot{x} = (n-1)a_nx^{2n-2}+(na-a)x^n+(nb-b)x^{n-1}+(nc-c)x^{n-2}
    \end{equation}
    Since $\dot{z}$ must be written as at most quadratic in $x$ and $z$, it must be of the form:
    \begin{equation}\label{z_prime_template}
      \dot{z} = (n-1)a_nz^2 + b_1zx + b_2z + b_3x^2 + b_4x + b_5
    \end{equation}
    for some $b_1, b_2, b_3, b_4, b_5 \in \mathbb{C}$. In \eqref{eq: dotz quadratization 1}, we have the term $(nc-c)x^{n-2}$. However, this term cannot be quadratized for $n\geqslant 5$ because there is no term with degree $n-2$ in \eqref{z_prime_template}. Thus, we find that $nc-c = 0$. Since $n \geq 5$, $c=0$. All other terms in \eqref{eq: dotz quadratization 1} can be written using some quadratic combination of $z$ and $x$.
    \end{proof}
    
    The following two lemmas and corollary are used to prove Theorem \ref{degree 6}.
    
    \begin{lemma}\label{simplify}
        Suppose $z_1, z_2, \dots, z_k$ quadratize $\dot{x}$. Consider $z_1$ and $z_2$. Let $a, b \in \mathbb{C}$. If $z_1, z_2, \dots, z_k$ quadratize $\dot{x}$, then $az_1+bz_2, z_2, \dots, z_k$ also quadratizes $\dot{x}$.
    \end{lemma}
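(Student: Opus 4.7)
The plan is to view the passage from $(z_1,z_2,\dots,z_k)$ to $(az_1+bz_2,z_2,\dots,z_k)$ as an invertible linear change of coordinates among the new variables (assuming $a\neq 0$, which appears to be implicit in the statement), and then to note that a linear substitution cannot increase the total degree of a polynomial, so quadratic right-hand sides stay quadratic.

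First, I would set $w_1 := az_1+bz_2$ and observe that, provided $a\neq 0$, the variable $z_1$ is expressible as the affine-linear combination
\begin{equation*}
 z_1 = \tfrac{1}{a}\,w_1 - \tfrac{b}{a}\,z_2
\end{equation*}
of $w_1$ and $z_2$. In particular, $\mathbb{C}[x,w_1,z_2,\dots,z_k]=\mathbb{C}[x,z_1,z_2,\dots,z_k]$, and any polynomial expression in the old variables has a canonical rewriting in the new ones obtained by substituting this identity.

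Next, I would verify each of the $k+1$ quadratic conditions for the new list. By hypothesis, each of $\dot{x},\dot{z}_2,\dots,\dot{z}_k$ is a polynomial of total degree at most $2$ in $x,z_1,\dots,z_k$. Performing the affine-linear substitution $z_1\mapsto \tfrac{1}{a}w_1-\tfrac{b}{a}z_2$ does not raise the total degree, so each of these right-hand sides becomes a polynomial of total degree at most $2$ in $x,w_1,z_2,\dots,z_k$. For the new variable itself, compute
\begin{equation*}
 \dot{w}_1 \;=\; a\,\dot{z}_1+b\,\dot{z}_2,
\end{equation*}
which is a $\mathbb{C}$-linear combination of two expressions that are quadratic in $x,z_1,\dots,z_k$, hence again quadratic in those variables, and after the same substitution quadratic in $x,w_1,z_2,\dots,z_k$.

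There is no serious obstacle here beyond bookkeeping; the only point worth flagging is the implicit assumption $a\neq 0$, without which the list $(bz_2,z_2,\dots,z_k)$ loses the information carried by $z_1$ and the substitution above is undefined. I would state this hypothesis explicitly at the start of the proof and then conclude that $(az_1+bz_2,z_2,\dots,z_k)$ satisfies the definition of a quadratization of $\dot{x}=p(x)$.
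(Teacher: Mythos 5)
Your proposal is correct and follows essentially the same route as the paper: the paper's proof simply asserts that any quadratic expression in $x,z_1,\dots,z_k$ can be rewritten as a quadratic expression in $x,az_1+bz_2,z_2,\dots,z_k$, which is exactly the invertible linear substitution you carry out explicitly. Your version is more careful in two respects the paper glosses over --- writing out $z_1=\tfrac{1}{a}w_1-\tfrac{b}{a}z_2$ and $\dot{w}_1=a\dot{z}_1+b\dot{z}_2$, and flagging the hypothesis $a\neq 0$, without which the statement as written is false --- but the underlying idea is identical.
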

    \begin{proof}
         Since $z_1, z_2, \dots, z_k$ quadratize $\dot{x}$, it follows that $\dot{x}, \dot{z}_1, \dot{z}_2, \dots, \dot{z}_k$ are written with at most quadratic right-hand side in $x, z_1, z_2, \dots, z_k$. Any quadratic term in $z_1$, $z_2$, \dots, $z_k$, and $x$ can be written as quadratic in $az_1+bz_2$, $z_2$, $z_3$, \dots, $z_k$, and $x$. Thus, it holds that if $z_1, z_2, \dots, z_k$ quadratize $\dot{x}$, then $az_1+bz_2, z_2, \dots, z_k$ also quadratizes $\dot{x}$.
    \end{proof}
    
    \begin{corollary}\label{cor: 2 var shift}
        Suppose $z_1, z_2, \dots z_k$ quadratize $\dot{x}$ where each term has leading coefficient $c_i$. Then, each of the new variables can have a distinct degree.
    \end{corollary}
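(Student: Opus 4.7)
The plan is to iteratively use Lemma~\ref{simplify} to collapse any shared degrees among the new variables. If two of the $z_i$ happen to have the same degree $d$, say $z_i$ and $z_j$ with leading coefficients $c_i, c_j$, I would replace $z_i$ by the linear combination $\tilde{z}_i := z_i - (c_i / c_j) z_j$, which by construction has degree strictly less than $d$. By Lemma~\ref{simplify}, the new list of variables obtained by swapping $z_i$ for $\tilde{z}_i$ still quadratizes $\dot{x}$.

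Next I would iterate: at each step, pick any pair of variables sharing a degree and apply the above reduction. To show the procedure terminates, I would order the tuple $(\deg z_1, \ldots, \deg z_k)$ in nonincreasing order and compare successive tuples lexicographically; each replacement strictly decreases this tuple, and since the tuple is bounded below, the process must halt after finitely many steps. At termination, all degrees are pairwise distinct, which is exactly the conclusion of the corollary.

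The main subtlety I anticipate is that $\tilde{z}_i$ may collide in degree with some third variable $z_\ell$ not originally involved in the pair, so the argument must be genuinely iterative rather than a single substitution; the lex-order termination argument is precisely what absorbs this issue cleanly. A second minor point is the boundary case where $\tilde{z}_i$ reduces to degree at most $1$ or vanishes identically: here Lemma~\ref{claim} lets us strip off any remaining linear and constant parts, and if nothing is left we simply drop the variable from the list, still ending with a quadratization whose new variables have pairwise distinct degrees.
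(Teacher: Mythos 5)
Your proposal is correct and follows the same route as the paper, which simply asserts that the corollary ``follows directly from Lemma~\ref{simplify}''; you have filled in the iteration and the lexicographic termination argument that the paper leaves implicit. The handling of the degenerate case (a combination dropping to degree at most one, dealt with via Lemma~\ref{claim} or by discarding the variable) is a reasonable extra precaution the paper does not spell out.
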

    \begin{proof}
        The proof follows directly from Lemma \ref{simplify}. 
    \end{proof}
    
    \begin{lemma}\label{lemma: n-1}
        Let $\dot{x} = p(x)$ be a scalar polynomial ODE with $\deg p := n\geqslant 5$ that can be quadratized with $k\geqslant 2$ new variables. Let $z_1, z_2, z_3, \dots, z_k$ denote the $k$ new variables used to quadratize $\dot{x}$. Suppose that for any $i\in \{1,2,3,\dots, k\}$, $\deg z_i \leqslant n-1$. For $S=\{\deg z_1, \deg z_2, \deg z_3, \dots, \deg z_k\}$, it holds that $n-1\in S$. 
    \end{lemma}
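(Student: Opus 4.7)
The plan is to examine the new variable $z_m$ of largest degree $d := \max_i \deg z_i$ and to force $d = n - 1$ via a pure degree count on the identity $\dot{z}_m = z_m'(x)\,\dot{x}$. As a preliminary observation, I note that $d$ is necessarily large: since $\dot{x}$ itself has degree $n \geq 5$ and must, by the definition of quadratization, be expressible as a polynomial of degree at most $2$ in $x, z_1, \ldots, z_k$ whose degree in $x$ is at most $2d$, we have $2d \geq n$, hence $d \geq 3$. In particular, $z_m'(x)$ is a nonzero polynomial of degree exactly $d - 1$ with nonzero leading coefficient.

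The first main step is to pin down $\deg \dot{z}_m$. Since $\dot{x}$ has leading coefficient $a_n \neq 0$ and $z_m'(x)$ has a nonzero leading coefficient of degree $d - 1$, the product $\dot{z}_m = z_m'(x)\,\dot{x}$ has degree exactly $(d-1) + n = d + n - 1$, with nonzero leading coefficient. The second main step is to bound the degree in $x$ of any polynomial of total degree at most $2$ in the variables $x, z_1, \ldots, z_k$: the largest-degree quadratic monomial among these is $z_m^2$, of degree $2d$, so every such polynomial has degree at most $2d$ in $x$.

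Combining the two steps, the quadratization requirement forces $d + n - 1 \leq 2d$, i.e., $d \geq n - 1$. Together with the hypothesis $d \leq n - 1$, this yields $d = n - 1$, so $n - 1 \in S$, as desired. I do not expect any serious obstacle here since the argument is a straightforward degree count; the only item that requires a little care is verifying that the leading coefficient of $\dot{z}_m$ does not vanish, which is immediate from $a_n \neq 0$ and $d \geq 3 > 0$.
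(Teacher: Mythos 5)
Your proof is correct and is essentially the paper's argument: both isolate the new variable of maximal degree $d$, compute $\deg \dot{z}_m = d + n - 1$ exactly, bound the degree of any quadratic expression in $x, z_1,\dots,z_k$ by $2d$, and conclude $d \geqslant n-1$. The only cosmetic differences are that the paper phrases it as a proof by contradiction and omits the (easy) check that the leading coefficient of $z_m'(x)\,\dot{x}$ is nonzero, which you supply.
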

    \begin{proof} 
        Assume for contradiction that $n-1 \notin S$. Thus, it follows that $\max(S)\leqslant n-2$. Assume that $\max(S)$ is $r$ for some $2 \leqslant r \leqslant n-2$. Let $z_r$ be the new variable with degree $r$. It follows that $\deg z_r = n+r-1$. Notice that the largest degree we can form with our new variables is $2r$. Thus, it follows that $2r\geqslant n+r-1$. However, this implies that $r\geqslant n-1$. Thus, $n-1\in S$. 
    \end{proof}
    
    \begin{lemma}\label{2 var limitation}
        Assume that $\dot{x}= p(x)$ be a scalar polynomial ODE with $\deg p := n\geqslant 5$ can be quadratized by two new variables. 
        Then, it can be quadratized using two new variables $y$ and $z$, one of which has degree $n-1$. 
    \end{lemma}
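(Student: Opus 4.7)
The plan is to apply Corollary~\ref{cor: 2 var shift} and Lemma~\ref{claim} to assume that the two given new variables $z_1, z_2$ satisfy $2 \leqslant d_1 := \deg z_1 < d_2 := \deg z_2$, and then to show that one of $d_1, d_2$ equals $n - 1$. The case $d_2 \leqslant n - 1$ is immediate from Lemma~\ref{lemma: n-1}, so the substantive work is the case $d_2 \geqslant n$.

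For $d_2 \geqslant n$, I would analyze $\dot{z}_2 = z_2'(x)\dot{x}$, whose leading $x$-degree is $d_2 + n - 1$. Because $z_2^2$ is the unique quadratic monomial in $x, z_1, z_2$ of leading $x$-degree $2d_2$ and $2d_2 > d_2 + n - 1$, the coefficient of $z_2^2$ in the quadratization of $\dot{z}_2$ must vanish. The next-highest leading $x$-degree is $d_1 + d_2$, attained uniquely by $z_1 z_2$ (using $d_1 \geqslant 2$ and $d_1 < d_2$), so matching forces $d_1 + d_2 \geqslant d_2 + n - 1$, i.e., $d_1 \geqslant n - 1$. If $d_1 = n - 1$, we are done.

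The main obstacle is ruling out $d_1 \geqslant n$, for which I would chain three rounds of the same ``kill the leading coefficient of the uniquely-largest quadratic monomial'' argument. First, applied to the quadratization of $\dot{x}$ (of degree $n$), iterating from the top down forces the coefficients of $z_2^2$, $z_1 z_2$, $z_1^2$, $x z_2$, $x z_1$ all to vanish, leaving $\dot{x} = c z_1 + Q(x)$ for some constant $c$ and some $Q(x) \in \mathbb{C}[x]$ of degree at most two; matching the $x^n$ term then forces $d_1 = n$ exactly, since $d_1 > n$ would make $\deg \dot{x} \leqslant 2$, contradicting $n \geqslant 5$. Second, the same iterative argument applied to $\dot{z}_1$ (of degree $2n - 1$) kills the coefficients of $z_2^2, z_1 z_2, z_1^2$ and forces the leading $x^{2n-1}$ term to come from either $x z_2$ or $z_2$, which pins down $d_2 \in \{2n - 2, 2n - 1\}$. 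Third, in each of these subcases I would inspect $\dot{z}_2$, whose leading degree is $3n - 3$ or $3n - 2$ respectively, and verify that for $n \geqslant 5$ no quadratic monomial in $x, z_1, z_2$ attains such a degree: $z_2^2$ and $z_1 z_2$ are already killed, $z_1^2$ reaches only $2n$, and $x z_2$, $z_2$ reach at most $2n$. This gives the required contradiction, so $d_1 = n - 1$.

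The delicate part of the plan is the nested bookkeeping of leading $x$-degrees across the three iterations, but once the list of quadratic monomials in $x, z_1, z_2$ and their leading degrees is laid out the calculations are routine.
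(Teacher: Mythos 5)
Your reduction to the two cases and your derivation of $d_1 \geqslant n-1$ from the leading term of $\dot z_2$ are correct and essentially the paper's own argument (there the larger variable is called $y$ and the smaller $z$). The genuine gap is in how you rule out $d_1 \geqslant n$. The ``kill the leading coefficient of the uniquely-largest quadratic monomial'' iteration is only valid while the largest remaining leading degree is attained by a \emph{unique} quadratic monomial in $x, z_1, z_2$, and in your rounds 1 and 2 this uniqueness can fail: for example $\deg z_2 = \deg(xz_1)$ when $d_2 = d_1+1$, $\deg z_1^2 = \deg(xz_2)$ when $d_2 = 2d_1-1$, and $\deg z_1^2 = \deg z_2$ when $d_2 = 2d_1$. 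When two monomials tie, their leading terms may cancel and the combination contributes at a strictly lower, uncontrolled degree, so you cannot conclude that both coefficients vanish. Concretely, round 1 does not establish $d_1 = n$ (by itself it cannot exclude, say, $d_1 = n+1$, $d_2 = n+2$, where $z_2$ and $xz_1$ tie at degree $n+2$), and round 2 omits the possibility $d_2 = 2n$, which survives the iteration via the tie between $z_1^2$ and $z_2$ at degree $2n$. So the bookkeeping you call ``routine'' is exactly where the argument needs care.

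The repair is to extract more from the $\dot z_2$ computation you already set up, which is the paper's route. The monomials $z_2^2$ and $z_1z_2$ are \emph{always} the unique quadratic monomials of their leading degrees $2d_2$ and $d_1+d_2$, so when $d_1 \geqslant n$ both degrees exceed $\deg\dot z_2 = d_2+n-1$ and both coefficients must vanish; every remaining monomial except $z_1^2$ has degree at most $d_2+1 < d_2+n-1$, and cancellation among ties only lowers degree, so matching the leading term of $\dot z_2$ forces $2d_1 = d_2+n-1$ exactly. Writing $d_1 = n-1+k$, $d_2 = n-1+2k$ with $k\geqslant 1$, the same leading-term count on $\dot z_1$ (degree $2n-2+k$, while after discarding $z_2^2, z_1z_2, z_1^2$ the largest remaining degree is $d_2+1 = n+2k$) leaves only $k\in\{n-2,\,n-1\}$, and in those two configurations every quadratic monomial other than $1, x, x^2$ has a distinct leading degree exceeding $n$, so the count applied to $\dot x$ gives $\deg\dot x\leqslant 2$, a contradiction. (The paper stops after the $\dot z_1$ count and asserts the low-degree monomials are ``too small,'' which also needs this last step; your instinct to use all three equations is right, but the constraints you feed into rounds 2 and 3 are not the ones actually forced.)
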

    \begin{proof}
        Let $y$ and $z$ be some quadratizing variables.
        By Corollary \ref{cor: 2 var shift}, they can be assumed to have distinct degrees. By Lemma \ref{claim},  $y$ and $z$ can be taken with no linear or constant term. So, we will assume $\deg y > \deg z \geqslant 2$. It holds that $\deg \dot{y} = n-1 + \deg y$. 
        
        We will first consider the case where $\deg y\leqslant n-1$. The proof of this follows directly from Lemma \ref{lemma: n-1}.
        
        Now, we will consider the case where $\deg y \geqslant n$. If $\deg y\geqslant n$, then $y^2$ cannot appear in the right-hand side of $\dot{y}$ because $\deg y^2 > \deg \dot{y}$. We will look to write the leading term of $\dot{y}$ as the highest degree term of some linear combination of the quadratic terms in $x,y,$ and $z$. 
        The leading term of $\dot{y}$ cannot be written as the highest degree term of any linear combination of $1, x, x^2, y, z, yx,$ or $zx$ because their degrees are too small. 
        
        Thus, we are left with $z^2$ and $yz$. Note that $\deg z^2 < \deg yz$. Thus, one of them must have degree $n-1+\deg y$. If $\deg yz = n-1+\deg y$, then it holds that $\deg z = n-1$.
        
        To finish the proof, we will show that $z^2$ cannot have degree $n-1 +\deg y$. 
        If $\deg z^2=n-1+\deg y$, then $\deg z \geqslant n$ and $\deg y \geqslant n+1$. Additionally, $\deg z^2=n-1+\deg y$ implies that the right-hand side must be even. So, $\deg y = n-1 + 2k$ for some $k\in\mathbb{N}$. It follows that $\deg z^2=2n-2+2k$, $\deg z = n-1+k$, and $\deg \dot{z} = 2n-2+k$. We must have that the leading term of $\dot{z}$ can be written as the highest degree term of some linear combination of the quadratic terms in $x, y,$ and $z$. No linear combination of $1, x, x^2, y, z, yx,$ and $zx$ have degree $2n-2+k$ because the degree of each of these terms is too small. Furthermore, no linear combination of $z^2, y^2,$ and $yz$ – each of which have distinct degrees – has degree $2n-2+k$ because the degree of each of these terms is too large. Thus, $z^2$ cannot have degree $n-1+\deg y$. 
    \end{proof}
 
    \begin{proof}[Proof of Theorem \ref{degree 6}]\label{proofdeg6}
         Suppose
         \begin{equation*}
             \dot{x} = p_6 x^6 + p_5 x^5 + p_4 x^4 + p_3x^3 + p_2 x^2 + p_1x +p_0
         \end{equation*}
         Let $x=\tfrac{y}{\sqrt[6]{p_6}}-\tfrac{p_5}{6 \cdot p_6}$. This change of variable aims to make $p_5$ equal to zero as demonstrated by Lemma \ref{lemma:shift} and make the leading coefficient $1$. Substituting for $x$, we have: 
         \begin{equation*}
         \begin{split}
             \dot{x} =\ & y^6 +  (\tfrac{p_4}{\sqrt[3]{p_6^2}}-\tfrac{5p_5^2}{12\sqrt[3]{p_6^5}})y^4+ (\tfrac{5p_5^3}{27\sqrt{p_6^5}}+\tfrac{p_3}{\sqrt{p_6}}-\tfrac{2p_5p_4}{3\sqrt{p_6^3}})y^3 + (\tfrac{p_5^2p_4}{6\sqrt[3]{p_6^7}}+\tfrac{p_2}{\sqrt[3]{p_6}}-\tfrac{p_5p_3}{2\sqrt[3]{p_6^4}}-\tfrac{5p_5^4}{144\sqrt[3]{p_6^{10}}})y^2 \\& + (\tfrac{p_5^5}{324\sqrt[6]{p_6^{25}}}+\tfrac{p_5^2p_3}{12\sqrt[6]{p_6^{13}}}-\tfrac{p_5^3p_4}{54\sqrt[6]{p_6^{19}}}-\tfrac{p_5p_2}{3\sqrt[6]{p_6^7}}+\tfrac{p_1}{\sqrt[6]{p_6}})x + (\tfrac{p_5^4p_4}{1296p_6^4} - \tfrac{5p_5^6}{46656p_6^5}-\tfrac{p_5^3p_3}{216p_6^3}-\tfrac{p_5p_1}{1296p_6^4}+p_0)
         \end{split}
         \end{equation*}
         For simplicity of notation, we will write:
         \begin{equation*}
         \dot{x}=q_0+q_1y+q_2y^2+q_3y^3+q_4y^4+y^6
         \end{equation*}
         Notice that $\tfrac{5q_3}{8} = \tfrac{25p_5^3}{216\sqrt{p_6^5}}-\tfrac{5p_5p_4}{12\sqrt{p_6^3}}+\tfrac{5p_3}{8\sqrt{p_6}}$. 
         By Lemma \ref{2 var limitation}, one of the new variables has degree $5$. Thus, let: 
        \begin{equation*}                        z_1:=y^5+\frac{5q_3}{8}y^2, \qquad     z_2: =y^3
        \end{equation*}
        It follows that for any constants $c_1, c_2, c_3\in\mathbb{C}$, we have that:
        \begin{equation}
        \begin{cases}
            \dot{x}=(1-c_1)z_1y+c_1z_2^2+(\tfrac{5c_1q_3+3q_3}{8}) z_2 +q_4z_2y+q_2y^2+q_1y+q_0\\
            \begin{split}
            \dot{z}_1=\  & 5z_1^2 + (5q_1-\tfrac{15q_3q_4}{8})z_1+(5q_2-c_2)z_1y+(\tfrac{5c_2q_3-15q_2q_3}{8})z_2+(5q_0-\tfrac{45q_3^2}{64})z_2y+c_2z_2^2\\&+5q_4z_1z_2+(\tfrac{75q_3^2q_4-120q_1q_3}{64})y^2+\tfrac{5q_0q_3}{4}y
            \end{split}\\
            \dot{z}_2 = 3z_1z_2+\frac{9q_3}{8}z_1+(3q_4-c_3)z_1y+(3q_1+\frac{5c_3q_3-15q_3q_4}{8})z_2+3q_2z_2y+c_3z_2^2+(3q_0-\frac{45q_3^2}{64})y^2
            \end{cases}
        \end{equation}
    \end{proof}
    Notice that the presence of constants $c_1, c_2, c_3$ in Theorem \textcolor{red}{\ref{degree 6}} suggests that there exists an infinite number of possible quadratizations of a degree $6$ scalar polynomial ODE with two new variables. 
    
    \begin{lemma}\label{cantquad}
        There exists scalar polynomial ODEs of degree $6$ which cannot be quadratized using two monomial new variables. 
    \end{lemma}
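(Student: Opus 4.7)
The plan is to enumerate all possible two-monomial quadratizations of a degree $6$ ODE, extract necessary vanishing conditions on the coefficients of $p(x)$ from each, and then exhibit a single polynomial that violates every one of these conditions simultaneously.

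First, I would reduce the search space for the candidate quadratizing pair. Suppose $\dot{x} = p(x)$ of degree $6$ is quadratized by two monomial new variables. By Lemma~\ref{claim}, we may drop constant and linear terms, so each new variable has the form $x^a$ with $a \geqslant 2$ (after rescaling, which does not affect quadratizability). Two monomials of equal degree are proportional, so one would be redundant; hence the two exponents must be distinct. By Lemma~\ref{2 var limitation}, one of them must equal $n-1 = 5$. Therefore the only candidate pairs are $\{x^a, x^5\}$ with $a \in \{2, 3, 4, 6, 7, 8, \ldots\}$.

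Next, for each value of $a$, I would compute the set $D_a$ of degrees appearing among the pairwise products of $\{1, x, x^a, x^5\}$. Quadratizing $\dot{x}$, $\dot{z}_1 = a x^{a-1} p(x)$, and $\dot{z}_2 = 5 x^4 p(x)$ requires each to be a $\mathbb{C}$-linear combination of these quadratic monomials; in the monomial setting each degree class is spanned by a single monomial, so any coefficient of $x^d$ appearing in one of these derivatives with $d \notin D_a$ must vanish. This translates directly into linear conditions on the $p_i$. Concretely: for $a = 2$, $D_2 = \{0,1,2,3,4,5,6,7,10\}$ and the gaps at $8, 9$ in $\dot{z}_2$ force $p_4 = p_5 = 0$; for $a = 3$ the gap at $7$ forces $p_3 = p_5 = 0$; for $a = 4$ the gaps at $3$ and $7$ force $p_0 = p_3 = p_4 = 0$; and for every $a \geqslant 6$ the degrees $3$ and $4$ are absent from $D_a$, so already quadratizing $\dot{x}$ alone forces $p_3 = p_4 = 0$.

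Finally, I would exhibit a single polynomial violating every one of these conditions. The ODE $\dot{x} = x^6 + x^4 + x^3$ has $p_3 = p_4 = 1$ and $p_5 = 0$; it therefore fails the $a = 2$ condition (via $p_4 \neq 0$), the $a = 3$ condition (via $p_3 \neq 0$), the $a = 4$ condition (via $p_3 \neq 0$), and the uniform $a \geqslant 6$ condition (via $p_3 \neq 0$). Consequently no two monomial new variables can quadratize it. The main obstacle in this argument is the infinite family $a \geqslant 6$, which I would handle uniformly by observing that for every such $a$ the elements of $D_a$ lying in $\{0, 1, \ldots, 6\}$ are always exactly $\{0, 1, 2, 5, 6\}$, so the vanishing of $p_3$ and $p_4$ is forced without any further case distinction.
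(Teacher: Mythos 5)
Your proposal is correct and follows essentially the same route as the paper: reduce via Lemma~\ref{2 var limitation} to a pair containing $x^5$, case on the degree of the second monomial, and in each case locate a power of $x$ in $\dot{x}$, $\dot{z}_1$, or $\dot{z}_2$ that no quadratic monomial in $x, z_1, z_2$ can produce. Your version is slightly more explicit (the sets $D_a$ of attainable degrees and the single witness $\dot{x}=x^6+x^4+x^3$, versus the paper's generic choice of all coefficients nonzero), but the substance is identical.
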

    \begin{proof}
        Consider
        \begin{equation*}\label{general}
            \dot{x}=p_6x^6 + p_4x^4 + p_3x^3 + p_2x^2 + p_1x + p_0
        \end{equation*}
        where $p_6, p_4, p_3, p_2, p_1, p_0 \in \mathbb{C}\backslash \{ 0 \}$. We use this form for $\dot{x}$ to reflect the possible use of shift on a general form degree $6$ scalar polynomial ODE as discussed in Lemma \ref{lemma:shift}.
        By Lemma \ref{2 var limitation}, one of the new variables must have degree $5$. Thus, for monomial quadratization, we have the following cases:
        \begin{equation*}
            \begin{cases}
                Case\ 1: z_1:= x^5, z_2:=x^2\\
                Case\ 2: z_1:=x^5, z_2:=x^3\\
                Case\ 3: z_1:=x^5, z_2:=x^4 \\
                Case\ 4: z_1:=x^5, z_2:=x^{5+k}\ \text{for}\ k \in \mathbb{N}
            \end{cases}
        \end{equation*}
        Notice that in Cases $1, 2$, and $3$, $\max \{\deg \dot{x}, \deg \dot{z}_1, \deg \dot{z}_2\}=10$. In Case 1, $x^8$ in $\dot{z}_1$ cannot be written as quadratic in $x,z_1, z_2$. In Case 2 and Case 3, $x^7$ in $\dot{z}_1$ cannot be written as quadratic in $x, z_1,$ and $z_2$. In Case 4, $x^3$ and $x^4$ in $\dot{x}$ cannot be written as quadratic in $x, z_1,$ and $z_2$. Thus, $2$ monomial new variables are not enough to quadratize all degree $6$ scalar polynomial ODEs. 
    \end{proof}
    \begin{lemma}\label{canquad}
        All degree $6$ scalar polynomial ODEs can be quadratized by three monomial new variables, $z_1:=x^5, z_2:=x^4, z_3:=x^3$.
    \end{lemma}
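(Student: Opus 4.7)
The plan is to verify the claim by direct computation, leveraging the fact that every monomial $x^k$ with $0 \leqslant k \leqslant 10$ can be expressed as a quadratic polynomial in the set $\{1, x, z_1, z_2, z_3\} = \{1, x, x^3, x^4, x^5\}$. The only degrees that arise in $\dot{x}$, $\dot{z}_1 = 5x^4\dot{x}$, $\dot{z}_2 = 4x^3\dot{x}$, and $\dot{z}_3 = 3x^2\dot{x}$ are at most $10$, so it suffices to check this single monomial-expression step once and then substitute throughout.

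First I would establish the \emph{monomial lookup table}: $x^2 = x \cdot x$, $x^3 = z_3$, $x^4 = z_2$, $x^5 = z_1$, $x^6 = z_3^2$, $x^7 = z_2 z_3$, $x^8 = z_2^2$ (alternatively $z_1 z_3$), $x^9 = z_1 z_2$, and $x^{10} = z_1^2$. Together with $1$ and $x$, this shows every monomial $x^k$ for $k \in \{0,1,\dots,10\}$ is realized as a quadratic (or lower) polynomial in $x, z_1, z_2, z_3$.

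Next I would compute each right-hand side explicitly. For $\dot{x}$, every monomial is of degree at most $6$ and so is handled by the lookup table. For $\dot{z}_1 = 5x^4\dot{x}$, the monomials that appear are $x^4, x^5, \dots, x^{10}$; applying the table gives $\dot{z}_1 = 5(p_6 z_1^2 + p_5 z_1 z_2 + p_4 z_2^2 + p_3 z_2 z_3 + p_2 z_3^2 + p_1 z_1 + p_0 z_2)$. An analogous substitution works for $\dot{z}_2 = 4x^3 \dot{x}$, where the maximal monomial is $x^9$, and for $\dot{z}_3 = 3x^2 \dot{x}$, whose maximal monomial is $x^8$.

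I do not expect a genuine obstacle here: the statement is essentially a direct verification. The only point requiring care is confirming that the highest-degree monomials $x^9$ and $x^{10}$ actually factor using only two of the new variables $z_1, z_2, z_3$ — which they do, since $9 = 4+5$ and $10 = 5+5$. Once this is noted, the rest is a bookkeeping exercise of expanding $\dot{z}_i = i \cdot x^{i-1}\dot{x}$ and replacing each $x^k$ by its entry from the lookup table.
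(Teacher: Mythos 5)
Your proposal is correct and matches the paper's proof, which is the same direct verification: substitute each monomial $x^k$ ($k\leqslant 10$) by a product of at most two elements of $\{1,x,z_1,z_2,z_3\}$ in $\dot{x}$ and $\dot{z}_i = i\,x^{i-1}\dot{x}$. The only cosmetic difference is that the paper writes the ambiguous monomials (e.g.\ $x^8 = c\,z_2^2 + (1-c)z_1z_3$) with free parameters $c_i$ to exhibit all decompositions, whereas you fix one choice, which suffices.
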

    \begin{proof}
        Let 
        \begin{equation*}
            \dot{x}=p_6x^6+p_5x^5+p_4x^4+p_3x^3+p_2x^2+p_1x+p_0
        \end{equation*}
        with $p_6\neq 0$. It follows that for constants $c_i$'s:
        \begin{equation}
            \begin{cases}
            \dot{x}=p_6z_1x+p_5z_1+p_4z_2+p_3z_3+p_2x^2+p_1x+p_0\\
            \dot{z}_1=5x^4\dot{x}= 5p_6z_1^2+5p_5z_1z_2+5p_4c_1z_2^2+5p_4(1-c_1)z_1z_3
            +5p_3z_2z_3+5p_2c_2z_1x\\ \ \ \ \ \ \ \ \ \ \ \ \ \ \ \ \  +5p_2(1-c_2)z_3^2+5p_1c_3z_1 + 5p_1(1-c_3)z_2x + 5p_0c_4z_2 + 5p_0(1-c_4)z_3x\\
            \dot{z}_2 = 4x^3\dot{x} = 4p_6z_1z_2 + 4p_5c_5z_2^2 + 4p_5(1-c_5)z_1z_3 + 4p_4z_2z_3 + 4p_3c_6z_1x + 4p_3 (1-c_6)z_2^2 \\ \ \ \ \ \ \ \ \ \ \ \ \ \ \ \ \ + 4p_2c_7z_1 + 4p_2(1-c_7)z_2x + 4p_1c_8z_2 + 4p_1 (1-c_8)z_3x + 4p_0 z_3
            \\
            \dot{z}_3 = 3x^2\dot{x} = 3p_6c_9z_2^2 + 3p_6(1-c_9)z_1z_3 + 3p_5z_2z_3 + 3p_4c_{10}z_1x + 3p_4 (1-c_{10})z_3^2 \\ \ \ \ \ \ \ \ \ \ \ \ \ \ \ \ \ + 3p_3c_{11}z_1 + 3p_3 (1-c_{11})z_2x + 3p_2c_{12}z_2 + 3p_2(1-c_{12})z_3x + 3p_1z_3 + 3p_0x^2
            \end{cases}
        \end{equation}
    \end{proof}

    \begin{proof}[Proof of Proposition \ref{3,4,5}]
    
        \emph{Part $(i)$.} All degree 3 scalar polynomial ODEs can be quadratized by exactly one new variable, $z:=x^2$.
        
        Let $\dot{x}=p_3x^3+p_2x^2+p_1x+p_0$ where $p_i\in\mathbb{C}$ for $i=0,1,2,3$ and $z:=x^2$.
        It follows that:
        \begin{equation}
            \begin{cases}
            \dot{x}=p_3zx+p_2x^2+p_1x+p_0\\
            \dot{z}=2x\dot{x}=2p_3z^2+2p_2zx+2p_1x^2+2p_0x
            \end{cases}
        \end{equation}
        
        \emph{Part $(ii)$.} All degree 4 scalar polynomial ODEs can be quadratized by exactly one new variable, $z:=x^3$.
        
        Let $\dot{x}=q_4x^4 + q_3x^3 + q_2x^2 + q_1 x + q_0$ where each $q_i\in\mathbb{C}$. By Lemma \ref{lemma:shift}, any degree $4$ scalar polynomial ODE can be uniquely shifted such that the coefficient behind $x^3$ becomes zero. Applying the change of variables $x = y - \tfrac{q_3}{4 \cdot q_4}$, we get that $\dot{y}=q_4y^4 + p_2y^2 + p_1 y + p_0$ for some $p_j$'s $\in\mathbb{C}$. Let $z:=y^3$.
        It follows that:
        \begin{equation}
            \begin{cases}
            \dot{y}=q_4zy + p_2y^2 + p_1 y+p_0\\
            \dot{z} = 3y^2\dot{y}= 3q_4z^2 + 3p_2zy+3p_1z+3p_0y^2
            \end{cases}
        \end{equation}
        
        Part $(iii)$. All  degree 5 scalar  polynomial  ODEs  can  be  quadratized  by  exactly  two  new  variables, $z_1:=x^4$ and $z_2:=x^3$.
        
        Let $\dot{x}=p_5x^5+p_4x^4+p_3x^3+p_2x^2+p_1x+p_0$ where $p_i\in\mathbb{C}$ for $i=0,1,2,3,4, 5$ and $p_5\neq0$. Also, let $z_1:=x^4$ and $z_2:=x^3$. 
        It follows that:
        \begin{equation}
           \begin{cases}
            \dot{x}=p_5z_1x+p_4z_1+p_3z_2+p_2x^2+p_1x+p_0\\
            \dot{z}_1=(z_1')(\dot{x})=4x^3(\dot{x})=4p_5z_1^2+4p_4z_1z_2+4p_3z_2^2+4p_2z_1x+4p_1z_1+4p_0z_2\\
            \dot{z}_2=(z_2')(\dot{x})=3x^2(\dot{x})=3p_5z_1z_2+3p_4z_2^2+3p_3z_1x+3p_2z_1+3p_1z_2+3p_0x^2
            \end{cases}
        \end{equation}
    \end{proof}
    

\section{Computational Techniques}\label{section: Computational Techniques}
    In this section, we describe the computational techniques used in order to gain the intuition for Theorem \textcolor{red}{\ref{theorem}} and find the form of the quadratization presented in Theorem \textcolor{red}{\ref{degree 6}}. 
    The main tool used for our computation was Gr\"obner bases. 
    
    A Gr\"obner basis is a set of multivariate polynomials that has desirable algorithmic properties. It holds that every set of polynomials can be transformed into a Gr\"obner basis. 
    Gr\"obner basis computation is an effective way of reducing or solving systems of equations and generalizes Gaussian elimination and the Euclidean algorithm for polynomials. For more, see \cite{Basis, Grobner}. 
    
    \subsection{Quadratization with One New Variable}
    In this subsection, we will outline the computational experiments used to gain the intuition for Theorem \ref{theorem}. For simplicity, we will focus on degree $5$ scalar polynomial ODE since it is the smallest degree for which Theorem \ref{theorem} can be applied. 
    
    Using Lemma \ref{lemma:shift}, Lemma \ref{variable}, and Lemma \ref{claim}, we introduce the following set-up:
    \begin{equation*}
        \begin{cases}
        \dot{x}= p_5x^5 + p_3x^3 + p_2x^2 + p_1x + p_0
        \\
        z:= x^4 + q_3x^3 + q_2x^2
        \end{cases}
    \end{equation*}
    
    We define our polynomial ring $R=\mathbb{C}[p_0, p_1, p_2, p_3, p_4, q_3, q_2]$. 
    
    We ask the following elimination question: for what values of $p_i$ does there exist values of $q_j$ such that $\dot{x}$ can be written as some linear combination  of $S_1=\{1, x, x^2, z, zx\}$ and $\dot{z}$ can be written as some linear combination of $S_2=\{1, x, x^2, z, zx, z^2\}$? Notice that $z^2$ is not in $S_1$ because $\deg z^2 = 8$, but $\deg \dot{x}=5$. 
    
    In order to answer our question, we produce the following two matrices where each entry is defined by the coefficient behind the term that corresponds to the row in the function that corresponds to the column: 
    \[
        \text{$\dot{x}$ matrix}
        =
        \begin{array}{c}
        
        \begin{array}{c c c c} & \ \ \ 1 \ \ \ x & x^2 \ \  z  \ \ \ xz & \dot{x} \\
        \end{array}
        \\
        \begin{array}{c}
        1 \\
        x \\
        x^2 \\
        x^3 \\
        x^4 \\
        x^5
        \end{array}
        
        \begin{pmatrix}
        1 & 0 & 0 & 0 & 0 & p_0\\
        0 & 1 & 0 & 0 & 0 & p_1\\
        0 & 0 & 1 & q_2 & 0 & p_2\\
        0 & 0 & 0 & q_3 & q_2 & p_3\\
        0 & 0 & 0 & 1 & q_3 & p_4\\
        0 & 0 & 0 & 0 & 1 & 1
        \end{pmatrix}
        
        \end{array}
    \]
    \[
        \text{$\dot{z}$ matrix}
        =
        \begin{array}{c}
        
        \begin{array}{c c c c c c c c c c c c c c c} 1 & x & x^2\ \  z & xz & & \ z^2 & & & & & & \ \ \dot{z} \ & & \\
        \end{array}

        \\
        \begin{array}{c}
        1 \\
        x \\
        x^2 \\
        x^3 \\
        x^4 \\
        x^5 \\
        x^6\\
        x^7 \\
        x^8
        \end{array}
        
        \begin{pmatrix}
        1 & 0 & 0 & 0 & 0 & 0 & 0\\
        0 & 1 & 0 & 0 & 0 & 0 & 0\\
        0 & 0 & 1 & q_2 & 0 & 0 & 0\\
        0 & 0 & 0 & q_3 & q_2 & 0 & (2p_2q_2+3p_1q_3+4p_0) \\
        0 & 0 & 0 & 1 & q_3 & q_2^2  & (2p_3q_2+3p_2q_3+4p_1)\\
        0 & 0 & 0 & 0 & 1 & 2q_2q_3  & (2p_4q_2+3p_3q_3+4p_2)\\
        0 & 0 & 0 & 0 & 0 & q_3^2+2q_2 & (4p_3+3p_4q_3 + 2q_2)\\
        0 & 0 & 0 & 0 & 0 & 2q_3 & (3q_3+4p_4)\\
        0 & 0 & 0 & 0 & 0 & 1 & 4
        \end{pmatrix}
        
        \end{array}
        \]
    Notice that in the $\dot{x}$ matrix, the first $5$ column vectors are linearly independent. 
    Thus, $\dot{x}$ can be written as some linear combination  of $S_1=\{1, x, x^2, z, zx\}$ iff all $6$ column vectors in the $\dot{x}$ matrix are linearly dependent. 
    This happens precisely when the determinant of the $\dot{x}$ matrix equals $0$. 
    
    Similarly, notice that in the $\dot{z}$ matrix, the first $6$ column vectors are linearly independent. 
    Thus, $\dot{z}$ can be written as some linear combination  of $S_2=\{1, x, x^2, z, zx, z^2\}$ iff all $7$ column vectors in the $\dot{z}$ matrix are linearly dependent. 
    Since the $\dot{z}$ matrix in non-square, this happens precisely when the minors of the $\dot{z}$ matrix equal $0$. 
    
    Thus, we define our set of polynomials as the determinant of the $\dot{x}$ matrix and the minors of the $\dot{z}$ matrix in $p_i$'s and $q_j$'s. More precisely, the problem we look to solve is: for what values of $p_i$'s does there exist $q_j$'s such that the determinant of the $\dot{x}$ matrix and the minors of the $\dot{z}$ matrix equal zero. 
    The way we solve this problem is by computing the Gr\"obner basis of this set of polynomials and then, selecting only the expressions in $p_i$'s. While this does not give the complete set of $p_i$'s, it gives us the closure of this set~\cite[Theorems~2 and~3, \S 3.1]{CLO}.
    This was enough to give us the intuition for the proof of Theorem~\ref{theorem}.

    The polynomials only in $p_i$'s of the computed Gr\"obner basis are: 
    \begin{equation*}
        \begin{cases}
        p_5p_0 = 0\\
        p_3 = 0
        \end{cases}
    \end{equation*}
    Since $p_5\neq 0$, it follows that $p_0=p_3=0$. Testing with higher degree scalar polynomial ODEs, the same pattern holds, giving us the intuition for the proof of Theorem \ref{theorem}.

    \subsection{Quadratization with Two New Variables}
    In this subsection, we will focus on the methods we used to find the quadratization presented in Theorem \ref{degree 6}. Our goal was to find the full characterization of the quadratization of a scalar polynomial ODE of degree $6$ with exactly two new variables or determine that it is not possible. 
    
    By Lemma \ref{2 var limitation}, we know one of the new variables has degree $5$. Using Corollary \ref{cor: 2 var shift}, we can say that our second new variable does not have degree $5$. To start, we assume that the other new variable has degree less than $5$. So, we try every possible combination of degrees of variables. In other words, we try two new variables of degree $5$ and degree $4$, degree $5$ and degree $3$, and degree $5$ and degree $2$. By Lemma \ref{claim}, we do not consider degree $1$ and degree $0$ for our second new variable. Here, we will simply outline the correct solution where our first new variable has degree $5$ and our second new variable has degree $3$. However, in order to thoroughly experiment with degree $6$ scalar polynomial ODEs, we conducted this computation with other degrees for our second new variable. We introduce the following setup. 
    
    Using Lemma \ref{lemma:shift}, let
    \begin{equation*}
        \dot{x}=x^6+p_4x^4+p_3x^3+p_2x^2+p_1x+p_0
    \end{equation*}
    Using Lemma  \ref{claim}, we can take our new variables with no linear or constant term. Let
    \begin{equation*}
        z_1:=x^5 + q_4x^4 + q_2x^2, \qquad z_2:=x^3 + r_2x^2
    \end{equation*}
    Note that we used Lemma \ref{simplify} to take $z_1$ with no $x^3$ term. 
    
    We would like to know when $\dot{x}$, $\dot{z}_1$, and $\dot{z}_2$ can be written as at most quadratic combination of $x, z_1, z_2$. In other words, we would like to know when $\dot{x}$, $\dot{z}_1$, and $\dot{z}_2$ can be written as a linear combination of the terms in $S=\{1, x, x^2, z_1, z_1x, z_1^2, z_2, z_2x, z_2^2, z_1z_2\}$. Thus, this happens when:
    \begin{equation}\label{grsystem}
        \begin{cases}
        \dot{x}-c_1x-c_2x^2-c_3z_1-c_4z_1x-c_5z_2-c_6z_2x-c_7z_2^2=0\\
        \dot{z}_1-c_8x-c_9x^2-c_{10}z_1-c_{11}z_1x-c_{12}z_1^2-c_{13}z_2-c_{14}z_2x-c_{15}z_2^2=0\\
        \dot{z}_2-c_{16}x-c_{17}x^2-c_{18}z_1-c_{19}z_1x-c_{20}z_1^2-c_{21}z_2-c_{22}z_2x-c_{23}z_2^2=0
        \end{cases}
    \end{equation}

    Thus, we define our polynomial ring as $R=\mathbb{C}[p_4, p_3, p_2, p_1, p_0, q_2, r_2, \Bar{c}]$ where $\Bar{c}=[c_1, c_2, \dots, c_{23}]$. Our system of equations is the coefficients behind each monomial term $x^i$ in each equation in \eqref{grsystem} set to zero. Precisely, we ask the following question: for what values of $p_i's$ does there exists $c_j's, q_k
    s$, and $r_l's$ that satisfy \eqref{grsystem}.
    
    In order to reduce the complexity of our computation, we computed the coefficients behind each $x^i$ term of each equation of \eqref{grsystem}. Taking these coefficients equal to zero, we aimed to replace as many $c_j's$ as possible in \eqref{grsystem} with terms in $p_4, p_3, p_2, p_1, p_0$ in order to reduce the number of terms in our polynomial ring. For example, the coefficient behind the linear term in the second equation is $c_8 - \tfrac{5p_0p_3}{4}$. Since the left-hand side of each equation must equal zero, we have that  $c_8 - \tfrac{5p_0p_3}{4}=0$. This gives us that $c_8 = \tfrac{5p_0p_3}{4}$. Thus, we replace $c_8$ in our system of equations with $\tfrac{5p_0p_3}{4}$ and remove $c_8$ from our polynomial ring. By doing this process multiple times, we simplify our computation by reducing the number of variables we must work with. After simplifying, we obtain that our Gröbner Basis is a set of zeros. Observing the replacements we made, we find the form of our quadratization. 


\section{Conclusion}
    We have shown in Theorem \textcolor{red}{\ref{theorem}} that a scalar polynomial ODE $\dot{x} = p(x)=a_nx^n+a_{n-1}x^{n-1}+o(x^{n-1})$ with $n \geqslant 5$ can be quadratized using exactly one new variable if and only if $p(x-\frac{a_{n-1}}{n\cdot a_n})=a_nx^n+ax^2+bx$ for some $a,b \in \mathbb{C}$. We have also shown in Theorem \textcolor{red}{\ref{degree 6}} that all degree $6$ scalar polynomial ODEs can be quadratized with two non-monomial new variables. Finally, we have shown that all degree $3$ and $4$ scalar polynomial ODEs can be quadratized with one monomial new variable and all degree $5$ scalar polynomial ODEs can be quadratized with two monomial new variables. 
    These results indicate that adding non-monomial variables may lead to substantially more optimal quadratization than the monomial ones used in the current software.
    They also give basic intuition about how to exploit non-monomiality (e.g., via shifts as in Theorem~\ref{theorem}).
    
    We employed computational techniques that made use of Gröbner Bases to help us gain intuition for Theorem \textcolor{red}{\ref{theorem}} and find the form of the new variables and quadratization in Theorem \textcolor{red}{\ref{degree 6}}. Our code is attached as a separate file. 
    

\section*{Acknowledgements}

The advisor, Gleb Pogudin, has been partially supported by NSF grants DMS-1853482, DMS-1760448, DMS-1853650, CCF-1564132, and CCF-1563942.



\end{document}